\author{Baptiste Chantraine}
\author{Emmy Murphy}
\address{Universit\'e de Nantes, France.}
\email{baptiste.chantraine@univ-nantes.fr}
\address{Northwestern University, United States.}
\email{e\_murphy@math.northwestern.edu}
\theoremstyle{plain}
\newtheorem{thm}{Theorem}[section]
\newtheorem{prop}[thm]{Proposition}
\newtheorem{coro}[thm]{Corollary}
\theoremstyle{definition}
\newtheorem{definition}[thm]{Definition}
\newtheorem{remark}[thm]{Remark}
\newtheorem{ex}[thm]{Example}
\theoremstyle{remark}
\newtheorem*{note}{Note}
\numberwithin{equation}{section}
\newcommand{\R}{\mathbb{R}}
\newcommand{\Z}{\mathbb{Z}}
\newcommand{\C}{\mathbb{C}}
\newcommand{\F}{\mathbb{F}}
\newcommand{\LL}{\mathcal{L}}
\newcommand{\p}{\varphi}
\newcommand{\e}{\varepsilon}
\newcommand{\dd}{\partial}
\newcommand{\ip}{\, \lrcorner \,}
\newcommand{\es}{\varnothing}
\newcommand{\op}{\operatorname}
\newcommand{\sse}{\subseteq}
\newcommand{\x}{\times}
\newcommand{\sm}{\setminus}
\newcommand{\pt}{\text{point}}
\newcommand{\wt}{\widetilde}
\newcommand{\ol}{\overline}
\newcommand{\std}{\text{std}}
\newcommand{\ot}{\text{ot}}
\newcommand{\conf}{\text{conf}}
\newcommand{\nov}{\text{Nov}}
\begin{document}
\title{Conformal symplectic geometry of cotangent bundles}
\thispagestyle{empty}
\maketitle

\begin{abstract}
We prove a version of the Arnol'd conjecture for Lagrangian submanifolds of conformal symplectic manifolds: a Lagrangian $L$ which has non-zero Morse-Novikov homology for the restriction of the Lee form $\beta$ cannot be disjoined from itself by a $C^0$-small Hamiltonian isotopy. Furthermore for generic such isotopies the number of intersection points equals at least the sum of the free Betti numbers of the Morse-Novikov homology of $\beta$. We also give a short exposition of conformal symplectic geometry, aimed at readers who are familiar with (standard) symplectic or contact geometry.
\end{abstract}

\section{Introduction}
\label{sec:introduction-1}

A \emph{conformal symplectic structure} on a manifold $M$ is a generalization of a symplectic structure. Locally, a conformal symplectic manifold is equivalent to a symplectic manifold, but the local symplectic structure is only well-defined up to scaling by a constant, and the monodromy of the local symplectic structure around curves may induce these rescalings. To our knowledge, the notion was first introduced by Vaisman in \cite{Vaisman_lck} and \cite{Vaisman}. They also appeared in work of Guedira and Lichnerowicz in \cite{Guedira_lichnerow}. It was later studied by Banyaga \cite{Banyaga_propertieslcs}, among many others.\footnote{In previous literature, this structure is often called a \emph{locally} conformal symplectic structure, which is a more accurate term. But also more cumbersome.} More recently a first general existence result was given in \cite{Apo_Dlou} for complex surfaces with odd first Betti number, it was later proved in \cite{E_M_Symp_Cob} that an almost symplectic manifold with non zero first Betti number admits a conformal symplectic structure, providing a large class of examples of such structures.
 
More formally, we can take a number of equivalent definitions. We can say that a conformal symplectic structure on $M$ is an atlas of charts to $\mathbb{R}^{2n}$, so that the transition maps $\psi_{ij}$ satisfy $\psi_{ij}^*\omega_\std = c_{ij}\omega_\std$, for constants $c_{ij} > 0$. Equivalently, let $E \to M$ be a flat, orientable, real line bundle. Then a conformal symplectic structure is a $2$-form on $M$ taking values in $E$, which is closed and non-degenerate.

Taking a connection on $E$ leads to the most tractable definition. A \emph{conformal symplectic structure} on $M$ is a pair $(\eta, \omega) \in \Omega^1M \x \Omega^2M$, so that $d\eta = 0$, $d\omega = \eta \wedge \omega$, and $\omega^{\wedge n} \neq 0$. Because the choice of connection is non-canonical, $(\eta, \omega)$ defines the same conformal symplectic structure as $(\eta + df, e^f\omega)$, for any $f \in C^\infty M$. (The choice of a specific $\eta$ representing $[\eta]$ is roughly analogous to a choice of contact form for a given contact structure, $\eta$ is called a \textit{Lee form} of the conformal symplectic structure since such a form appeared in the work of Lee \cite{Lee_evendim}.)

For an example of a conformal symplectic manifold, let $\beta$ be a closed $1$-form on a manifold $Q$, let $\lambda = p\cdot dq$ be the tautological $1$-form on $T^*Q$, let $\eta \in \Omega^1(T^*Q)$ be the pullback of $\beta$ under the projection, and let $\omega = d\lambda - \eta \wedge \lambda$.

Conformal symplectic manifolds enjoy many of the properties that make symplectic manifolds interesting. The definitions of Lagrangian, isotropic, etc. are exactly the same, and there is a natural notion of exact conformal symplectic structures, and exact Lagrangians inside them. They have a natural Hamiltonian dynamics (a smooth function defines a flow preserving the structure). They satisfy a Moser-type theorem, which implies that Darboux's theorem and the Weinstein tubular neighborhood theorems hold in this context (a small neighborhood of any Lagrangian is equivalent to the example above). When restricted to a coisotropic, the kernel of $\omega$ is a foliation, and in the case the leaf space is a manifold it inherits a conformal symplectic structure. The Poisson bracket on Hamiltonians intertwines the Lie bracket.

However, many of the more modern methods in symplectic geometry cannot be easily generalized, and often the theorems fail to hold true. For example, suppose that $\beta \in \Omega^1 Q$ is a $1$-form which never vanishes. Then in the conformal symplectic manifold $(T^*Q, \eta, \omega)$ defined above, the zero section $Z = \{p=0\}$ is displaceable by Hamiltonian isotopy. In fact, if $\p_t$ is the Hamiltonian isotopy generated by the Hamiltonian $H = 1$, then $\p_t(Z) \cap Z = \es$ for any $t>0$.

From the point of view of Floer theory, the problem with conformal symplectic structures is that we cannot even get started. The set of almost complex structures $J$ compatible with $\omega$ is still a non-empty contractible space, and $\ol\dd_J$ is still an elliptic operator, but because $\omega$ is not closed we have no bounds on energy, and therefore we do not expect Gromov compactness to hold, even for conformal symplectic manifolds which are both closed and exact. We discuss explicit examples suggesting failure of Gromov compactness in Section \ref{sec:failure-compactness}. Whether Gromov compactness can be generalized to this context by defining a more sophisticated compactification remains to be seen. 

The main theorem of this paper proves the following an analogous of Laudenbach-Sikorav's Theorem \cite{Lau_Sik} in the context of conformal symplectic manifolds. We let $H^\nov_*(L, [\eta]; \F)$ be the Novikov homology of $L$ in the homology class $[\eta] \in H^1(L; \R)$.

\begin{thm}\label{thm:rigidprinc}
Let $\eta$ be a closed $1$-form on an orientable manifold $Q$, and let $\F$ be a field. Let $\phi$ be an Hamiltonian diffeomorphism of the conformal symplectic manifold $(T^*Q,\eta,d\lambda-\eta\wedge\lambda)$ (where $\lambda$ is the canonical form) such that $\phi(Q_0)$ intersect $Q_0$ transvesally, then $\#\{\phi(Q_0) \cap Q_0\}\geq \op{rk} H^\nov_*(Q, [\eta]; \F)$. If $Q$ is non-orientable then $\#\{\phi(Q_0) \cap Q_0\}\geq \op{rk} H^\nov_*(Q, [\eta]; \Z_2)$
\end{thm}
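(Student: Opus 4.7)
The plan is to lift the problem to the cover of $Q$ on which $\eta$ becomes exact, where the conformal symplectic problem becomes a standard symplectic one, apply Laudenbach-Sikorav in a $\Gamma$-equivariant form, and extract the Novikov bound via equivariant Morse theory. Let $\pi: \tilde{Q} \to Q$ be the cover corresponding to $\ker([\eta]) \sse \pi_1 Q$, with deck group $\Gamma$ and period homomorphism $c: \Gamma \to \R$, and choose $F: \tilde{Q} \to \R$ with $dF = \pi^*\eta$ and $F \circ \gamma = F + c(\gamma)$. The pulled-back conformal symplectic form is $\tilde\omega = d\lambda - dF \wedge \lambda = e^F \cdot d(e^{-F}\lambda)$, conformally equivalent to the honest symplectic form $\omega' := d(e^{-F}\lambda)$, so they share the same Lagrangians. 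A direct calculation from the conformal symplectic Hamiltonian equation $\iota_{X_H}\tilde\omega = dH - H\,dF$ shows that $X_H$ is the $\omega'$-Hamiltonian vector field of $e^{-F}H$ and that the conformal scaling of $\tilde\omega$ along the flow is exactly compensated by $e^{-F}$, giving $\tilde\phi^*\omega' = \omega'$ identically. Hence the lift $\tilde\phi$ of $\phi$ is a genuine Hamiltonian symplectomorphism of $(T^*\tilde Q, \omega')$ commuting with $\Gamma$, even though the deck action scales $\omega'$ (and hence the generating Hamiltonian $e^{-F}\tilde H$) by $e^{-c(\gamma)}$.

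The Lagrangian $\tilde L := \tilde\phi(\tilde Q_0)$ is therefore Hamiltonian isotopic to the zero section in $(T^*\tilde Q, \omega')$, so the Laudenbach-Sikorav theorem produces a generating function $\tilde S: \tilde Q \x \R^N \to \R$ quadratic at infinity. Running the Sikorav construction equivariantly --- composing the elementary generating functions for a $\Gamma$-invariant time partition of the isotopy and tracking the $e^{-c(\gamma)}$-scaling inherited from $\omega'$ --- yields $\tilde S$ satisfying $\tilde S(\gamma x, \xi) = e^{-c(\gamma)} \tilde S(x, \xi)$, with a $\Gamma$-invariant quadratic part at infinity. Critical points of $\tilde S$ are in $\Gamma$-equivariant bijection with $\tilde L \cap \tilde Q_0$, so their $\Gamma$-orbits are in bijection with $\phi(Q_0) \cap Q_0$.

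To extract the Novikov bound, one considers the Morse chain complex of $\tilde S$ for a $\Gamma$-invariant metric: $\Gamma$-equivariance of the gradient flow makes it a complex of finitely-generated free $\F[\Gamma]$-modules on the intersection orbits, and the scaling $\tilde S \circ \gamma = e^{-c(\gamma)} \tilde S$ controls the action drop of a gradient trajectory from a critical point $p$ to $\gamma q$ by $c(\gamma)$, so that the differential extends continuously to the Novikov completion $\Lambda$ of $\F[\Gamma]$ with respect to the $[\eta]$-filtration. By the standard argument identifying the quadratic-at-infinity Morse complex of a generating function with the singular homology of the base (shifted by the index and dimension of the quadratic part), the homology of this complex is $H^\nov_*(Q, [\eta]; \F)$. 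The Morse inequality over $\Lambda$ therefore yields $\#\{\phi(Q_0) \cap Q_0\} \ges \op{rk} H^\nov_*(Q, [\eta]; \F)$. The non-orientable case is handled by taking $\F = \Z_2$ throughout, which sidesteps orientation issues on the moduli spaces of gradient trajectories.

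The main difficulty I expect is the equivariant generating function construction on the non-compact cover: simultaneously arranging quadratic-at-infinity behaviour and $e^{-c(\gamma)}$-scaling with a $\Gamma$-invariant quadratic part, since the standard Laudenbach-Sikorav statement is for compact base, and extending it here relies on the cocompactness of the $\Gamma$-action together with the rescaling equivariance of the data. A secondary subtlety is identifying the equivariant Morse homology of $\tilde S$ on the cover with $H^\nov_*(Q, [\eta]; \F)$ rather than some dual or twisted variant, which requires careful tracking of the sign of $\eta$ (as it enters the defining relation $d\omega = \eta \wedge \omega$) and of the degree shift coming from the quadratic part.
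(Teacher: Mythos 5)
Your first half is, up to a change of packaging, the reduction the paper makes via Theorem \ref{stablecrit}: the lift of the Hamiltonian flow to the $\ker[\eta]$-cover is indeed a genuine Hamiltonian flow for $\omega'=d(e^{-\theta}\lambda)$ (with Hamiltonian $e^{-\theta}\tilde H$), and one wants a generating family whose critical points count the intersections. The paper gets this downstairs, as a \emph{$\beta$-generating family} $F:Q\x\R^m\to\R$ that is honestly quadratic at infinity, using the contactomorphism $\J^1_\beta(Q)\cong\J^1(Q)$ plus Chekanov persistence; your equivariant version on the cover is the same object written as $\tilde S=e^{-\theta}\tilde F$. But note an internal inconsistency in your formulation: $\tilde S(\gamma x,\xi)=e^{-c(\gamma)}\tilde S(x,\xi)$ is incompatible with a $\Gamma$-invariant quadratic part at infinity unless $c\equiv 0$; the quadratic part must scale with the base point, i.e.\ the right normalization is exactly the downstairs one, where the critical-point equation becomes $dF-F\beta=0$ rather than $dS=0$ for a genuine GFQI.

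The genuine gap is in your last step. The zeros of $dF-F\beta$ are not the zeros of a closed $1$-form ($d(dF-F\beta)=-dF\wedge\beta\neq 0$), and the Morse complex of $\tilde S=e^{-\theta}\tilde F$ does \emph{not} extend to the Novikov completion by the scaling argument you give: a gradient trajectory from $\tilde p$ to $\gamma\tilde q$ only forces $e^{-c(\gamma)}\tilde S(\tilde q)<\tilde S(\tilde p)$, which bounds $c(\gamma)$ from below when both critical values are positive, from above when both are negative, and gives \emph{no constraint at all} when $\tilde S(\tilde p)>0>\tilde S(\tilde q)$. Since the $\beta$-critical values of $F$ genuinely take both signs, the matrix entries of your differential need not lie in any single Novikov completion of $\F[\Gamma]$, and there is no ``standard argument'' identifying the resulting homology with $H^\nov_*(Q,[\eta];\F)$. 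This sign problem is precisely what the paper's Section \ref{sec:rigid-lagr-inters} is built to circumvent: it replaces $F$ by $H=\log G$ with $G=|F|$ off a collar $X_0=\{|F|\le\e\}$, so that $\gamma=dH-\beta$ is an honest closed $1$-form whose Morse--Novikov theory is standard on each of $X_+$ and $X_-$ separately, accepts that $\gamma$ has spurious zeros in $X_0$, and then proves the rank inequality $\op{rk}H^\nov_*(X_+,\dd X_+)+\op{rk}H^\nov_*(X_-,\dd X_-)\ge\op{rk}H^\nov_*(Q)$ by a two-case Mayer--Vietoris/excision/Poincar\'e-duality argument (this is where orientability of $Q$, or $\F=\Z_2$, enters). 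That inequality, not an isomorphism of Morse homologies, is what yields the bound; your proposal is missing this entire step.
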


\begin{coro}\label{cor:C0 close}
Let $(M, \eta, \omega)$ be a conformal symplectic manifold, and let $L \sse M$ be a Lagrangian. If $\p_t$ is any $C^0$ small Hamiltonian isotopy, then $\#\{\p_1(L) \cap L\} \geq \op{rk} H^\nov_*(L, [\eta]|_L; \Z_2)$. If $L$ is oriented we may replace $\Z_2$ with any field $\F$.
\end{coro}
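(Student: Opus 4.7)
The plan is to reduce the statement to the cotangent bundle case of Theorem \ref{thm:rigidprinc} by means of the conformal symplectic Weinstein neighborhood theorem quoted in the introduction.

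First I would choose a Weinstein tubular neighborhood of $L$, identifying an open neighborhood $U$ of $L$ in $(M,\eta,\omega)$ with an open neighborhood $V$ of the zero section $Q_0$ in the model $(T^*L,\pi^*\beta,d\lambda-\pi^*\beta\wedge\lambda)$, where $\beta=\eta|_L$ and $\pi:T^*L\to L$ is the bundle projection. Under this identification $L$ corresponds to $Q_0$, and the cohomology class of the Lee form restricted to $L$ equals $[\beta]\in H^1(L;\R)$, so the target of the inequality matches that of Theorem \ref{thm:rigidprinc}.

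Next I would transport the given $C^0$-small isotopy into the local model. Because $\p_t$ is $C^0$-small and $L$ is compact, the orbit set $\bigcup_{t\in[0,1]}\p_t(L)$ lies in an open subneighborhood $V'$ with $\ol{V'}\sse U$. Letting $H_t$ be a Hamiltonian generating $\p_t$ and choosing a smooth cutoff $\rho:M\to[0,1]$ supported in $U$ with $\rho\equiv 1$ on $V'$, the Hamiltonian $\wt H_t=\rho H_t$ has compact support in $U$, and its flow $\wt \p_t$ agrees with $\p_t$ on $L$, since the orbits starting on $L$ never leave the region where $\rho$ is identically one. In particular $\wt\p_1(L)=\p_1(L)$, and the set of intersections with $L$ is preserved. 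Transporting $\wt H_t$ through the Weinstein identification and extending by zero produces a compactly supported Hamiltonian $K_t$ on all of $T^*L$; let $\phi$ denote its time-$1$ map, which is a Hamiltonian diffeomorphism of the conformal symplectic manifold of Theorem \ref{thm:rigidprinc} with $\phi(Q_0)$ corresponding to $\p_1(L)$.

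Finally I would achieve transversality, if needed, by a further $C^0$-small Hamiltonian perturbation supported in $U$; since one is only bounding $\#\{\p_1(L)\cap L\}$ from below, such a perturbation can only decrease the intersection count. Applying Theorem \ref{thm:rigidprinc} to $\phi$ then gives
\[
\#\{\p_1(L)\cap L\}=\#\{\phi(Q_0)\cap Q_0\}\ges \op{rk} H^\nov_*(L,[\eta]|_L;\F),
\]
with $\F=\Z_2$ in the non-orientable case. The main subtle point is the cutoff step: one must verify that replacing $H_t$ by $\rho H_t$ does not alter the orbits of points in $L$, which is precisely where the $C^0$-smallness hypothesis enters. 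Everything else is a direct citation of the Weinstein neighborhood theorem and of Theorem \ref{thm:rigidprinc}.
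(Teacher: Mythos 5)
The paper gives no separate proof of this corollary, and the intended argument is exactly the one you give: identify a neighborhood of $L$ with a neighborhood of the zero section in $(T^*_\beta L,\pi^*\beta,d_{\pi^*\beta}\lambda_\std)$ with $\beta=\eta|_L$ via Theorem \ref{thm:lag nbhd}, cut off the Hamiltonian so that the isotopy of $L$ is unchanged but the generating flow becomes compactly supported in the model, and then invoke Theorem \ref{thm:rigidprinc}. Your cutoff step is handled correctly: since $X_{\rho H}\ip\omega=-\rho\, d_\eta H-H\,d\rho$, the two vector fields agree wherever $d\rho=0$, so by uniqueness of ODE solutions the orbits emanating from $L$ are unaffected, and the transported Hamiltonian is compactly supported, which is what the generating-family argument behind Theorem \ref{thm:rigidprinc} requires. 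This is the right reduction.

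The one step that does not work as written is the final transversality paragraph. A $C^0$-small perturbation making $\p_1(L)$ transverse to $L$ does \emph{not} ``only decrease the intersection count'': a degenerate tangency can split into several transverse intersection points under an arbitrarily small perturbation, so the perturbed count bounded below by $r$ tells you nothing about the original count beyond non-emptiness. The honest conclusions available from Theorem \ref{thm:rigidprinc} are: (i) if $\p_1(L)\pitchfork L$, the full bound $\#\{\p_1(L)\cap L\}\ges\op{rk}H^\nov_*(L,[\eta]|_L;\F)$ holds, which is the reading consistent with the theorem's hypotheses and with the abstract's ``for generic such isotopies''; and (ii) without transversality, one still gets $\p_1(L)\cap L\neq\es$ whenever the Novikov homology is non-zero, by taking a sequence of transverse perturbations shrinking to zero and extracting a convergent sequence of intersection points. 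You should either add the transversality hypothesis to match Theorem \ref{thm:rigidprinc}, or replace the last paragraph by this limiting argument and weaken the degenerate-case conclusion to non-emptiness.
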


In order to prove Theorem \ref{thm:rigidprinc} we prove an analogous to Sikorav's Theorem \cite{Sikorav_famgen} about persistence of generating families which allows to provide bounds for intersection of Lagrangian submanifolds in conformal contangent in terms of stable $\eta$-critical points of function. 

The layout of the paper follows. Section \ref{sec:introduction} introduces the basic definitions and theorems in conformal symplectic geometry. We include a number of propositions which are not necessary for the proof of Theorem \ref{thm:rigidprinc}, with the hope of giving a large-scale introduction to the theory, particularly for symplectic and contact geometers. 

Section \ref{sec:overv-morse-novik} gives a brief overview of Morse-Novikov homology needed for the proof of Theorem \ref{thm:rigidprinc}. Finally, in Section \ref{sec:rigid-lagr-inters} we complete the proof.

\section*{Acknowledgements}
\label{sec:aknowledgments}
The authors thank Vestislav Apostolov and François Laudenbach for inspiring discussions. The first author benefited from the hospitality of several institutions and wishes to thank the institute Mittag-Leffler in Stockholm, CIRGET in Montréal and MIT in Cambridge for the nice work environment they provided. The second author would like to thank Universit\'e de Nantes and the Radcliffe Institute for Advanced Study for their pleasant work environments. B. Chantraine is partially supported by the ANR project COSPIN (ANR-13-JS01-0008-01) and the ERC starting grant G\'eodycon. E. Murphy is partially supported by NSF grant DMS-1510305 and a Sloan Research Fellowship.

\section{Main definitions}
\label{sec:introduction}

\subsection{Conformal symplectic manifolds.}
\label{sec:conf-sympl-manif}

\begin{prop}\label{prop:equiv def}
Let $M$ be a $2n$-manifold. The following are equivalent:

\begin{itemize}
\item An atlas of charts $M = \bigcup U_i$, $\p_i:U_i \to \R^{2n}$, so that the transition maps $\psi_{ij} = \p_i \circ \p_j^{-1}$ preserve the standard symplectic form up to scaling by a positive local constant: $\psi_{ij}^*(\omega_\std) = c_{ij} \omega_\std$. Two atlases are considered equivalent if they admit a common refinement.

\item A flat, real, orientable line bundle $E \to M$, and a $2$-form $\sigma \in \Omega^2(M, E)$, so that $\sigma$ is non-degenerate (as a map $TM \to T^*M \otimes E$) and closed (as a form with values in a flat line bundle). Two such structures $(E_1, \sigma_1)$, $(E_2, \sigma_2)$ are considered equivalent if there is an isomorphism $\p: E_1 \to E_2$ covering the identity map, so that $\p^*\sigma_2 = \sigma_1$.

\item A pair $(\eta, \omega) \in \Omega^1M \x \Omega^2M$, so that $d\eta = 0$, $d\omega = \eta \wedge \omega$, and $\omega^{\wedge n} \neq 0$. $(\eta, \omega)$ is equivalent to $(\eta + df, e^f\omega)$ for any $f \in C^\infty M$.
\end{itemize}

Any of these structures is called a \emph{conformal symplectic structure} on $M$.
\end{prop}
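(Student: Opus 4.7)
The strategy is to prove the chain $(3) \Leftrightarrow (2) \Leftrightarrow (1)$, with the bundle formulation serving as the pivot between the coordinate and global form descriptions.

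For $(3) \Rightarrow (2)$, given $(\eta, \omega)$, I take $E = M \times \R$ with flat connection $\nabla = d - \eta$ (closed since $d\eta = 0$). The form $\omega$, viewed as an $E$-valued $2$-form via the canonical trivialization, is $\nabla$-closed because $d_\nabla \omega = d\omega - \eta \wedge \omega = 0$; non-degeneracy is immediate from $\omega^{\wedge n} \neq 0$. The gauge $(\eta, \omega) \mapsto (\eta + df, e^f\omega)$ corresponds precisely to the change of trivialization by $e^{-f}: \R \to \R$. For $(2) \Rightarrow (3)$, since $E$ is an orientable real line bundle it admits a global trivialization; in any such trivialization $\nabla = d - \eta$ for a well-defined global closed $\eta$, and $\sigma$ becomes an ordinary $2$-form $\omega$ satisfying $d\omega = \eta \wedge \omega$. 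Different global trivializations differ by a positive function $e^f$, giving the stated equivalence.

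For $(2) \Rightarrow (1)$, working with a representative $(\eta, \omega)$ as above, I cover $M$ by contractible open sets $U_i$ on which $\eta = df_i$. Then $\omega_i := e^{-f_i}\omega$ is genuinely symplectic, since $d(e^{-f_i}\omega) = -e^{-f_i} df_i \wedge \omega + e^{-f_i} \eta \wedge \omega = 0$. By Darboux's theorem (in the standard symplectic sense), I refine the cover so each $U_i$ carries a chart $\p_i : U_i \to \R^{2n}$ with $\p_i^*\omega_\std = \omega_i$. On overlaps, $d(f_j - f_i) = 0$, so after further refinement $f_j - f_i$ is a constant $a_{ij}$, and the transition satisfies $\psi_{ij}^*\omega_\std = \omega_j/\omega_i = e^{a_{ij}}\omega_\std$, with $c_{ij} = e^{a_{ij}} > 0$. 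For $(1) \Rightarrow (2)$, the positive cocycle $\{c_{ij}\}$ defines an oriented flat line bundle $E$, and the local forms $\p_i^*\omega_\std$ patch into a well-defined $E$-valued non-degenerate closed $2$-form.

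The main conceptual point to be careful about is that the \emph{orientability} assumption on $E$ in $(2)$, the \emph{positivity} of the constants $c_{ij}$ in $(1)$, and the existence of a \emph{globally defined} Lee form $\eta$ in $(3)$ are all tracking the same data: without orientability, the holonomy of the flat structure could contain $-1$, $\eta$ would only be defined up to sign on loops, and $c_{ij}$ could be negative. Checking that these three orientability assumptions correspond correctly under the bijections above is the only non-formal piece; everything else is bookkeeping about connections on trivial line bundles and a local application of Darboux. The equivalence relations on the three sides also need to be matched: common refinement of atlases corresponds to gauge equivalence $(\eta, \omega) \sim (\eta + df, e^f\omega)$ via the constructions above, which in turn corresponds to bundle isomorphism covering the identity.
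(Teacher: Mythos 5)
Your proof is correct and covers all the required implications. The equivalences $(1)\Leftrightarrow(2)$ and $(2)\Leftrightarrow(3)$ are handled exactly as in the paper: the positive cocycle $\{c_{ij}\}$ as clutching data for a flat orientable line bundle, and the identification of a flat connection on the trivialized bundle with a closed $1$-form $\eta$, with gauge changes of trivialization accounting for $(\eta,\omega)\sim(\eta+df,e^f\omega)$. The one place you genuinely diverge is in producing the atlas from the global data: the paper passes to the covering space $\pi:\wt M\to M$ associated to $[\eta]$, writes $\pi^*\eta=d\theta$ globally, rescales to the honest symplectic form $\wt\omega=e^{-\theta}\pi^*\omega$, and pushes a Darboux atlas back down using the monodromy relation $g^*\wt\omega=e^{-\langle[\eta],g\rangle}\wt\omega$; you instead work directly on contractible charts where $\eta=df_i$, rescale locally to $\omega_i=e^{-f_i}\omega$, and apply Darboux chart by chart, reading off $c_{ij}$ from the locally constant differences $f_j-f_i$. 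Your version is slightly more elementary (no covering space needed) and matches the statement's allowance that $c_{ij}$ be a positive \emph{local} constant, so the further refinement you invoke to make $f_j-f_i$ constant is not even necessary; the paper's version has the advantage of making the global holonomy of the conformal structure visible as the homomorphism $[\eta]:\pi_1M\to\R$. Your closing remark that orientability of $E$, positivity of $c_{ij}$, and global existence of $\eta$ track the same data is accurate and is implicit rather than explicit in the paper's argument.
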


\begin{proof}
Given an atlas, the association of the positive number $c_{ij}$ to every intersection $U_i \cap U_j$ can be thought of as the clutching function for a principal $GL^+(1, \R)$-bundle on $M$, with the discrete topology. That is, the numbers $c_{ij}$ define an orientable flat line bundle, $E$, and the form $\sigma =\p_i^*\omega_\std$ is well defined as a $2$-form taking values in $E$. It is closed and non-degenerate, since these are local conditions. 

Given a pair $(E, \sigma)$, we can identify $E \cong M \x \R$ as smooth vector bundles globally, since $E$ is a real, orientable line bundle. A choice of flat connection on $E$ then is simply a closed $1$-form $\eta \in \Omega^1(M)$. In this way we can identify $\sigma$ as an ordinary $2$-form $\omega \in \Omega^2(M)$. Then to say that $\omega$ is closed as a $2$-form with values in $E$ is equivalent to the statement $d\omega - \eta \wedge \omega = 0$. The choice of connection $\eta$ is not canonical; gauge symmetries of $E$ act by $\eta \mapsto \eta + df$ for any $f \in C^\infty(M)$, and this gauge symmetry acts on $\omega$ by $\omega \mapsto e^f\omega$.

Given a pair $(\eta, \omega)$ as above, consider the covering space $\pi: \wt M \to M$ associated to $[\eta]$. Then $\pi^*\eta = d\theta$ for some $\theta \in C^\infty \wt M$, and for any covering transformation $g \in \pi_1M / \ker [\eta] \sse \op{Diff}\wt M$, we have $\theta \circ g = \theta + \langle[\eta], g \rangle$. Let $\wt\omega = e^{-\theta}\pi^*\omega \in \Omega^2\wt M$. Then $\wt\omega$ is a symplectic form, and $g^*\wt\omega = e^{-\langle[\eta], g \rangle}\wt\omega$. Therefore by taking a Darboux atlas on $(\wt M, \wt\omega)$, we get a conformal symplectic atlas on $M$.
\end{proof}

\begin{definition}\label{def:deta}
Let $\eta$ be a closed $1$-form on a manifold $M$. The \emph{Lichnerowicz-De Rham differential} on $\beta \in \Omega^*(M)$ is defined as $d_\eta \beta=d\beta-\eta\wedge\beta$. 
\end{definition}

From the fact that $\eta$ is closed and of odd degree we get that $d_\eta^2=0$ and from the fact that $\eta$ has degree $1$ we see that $d_\eta$ is degree $1$ as well. Perhaps the most essential difference between $d_\eta$ and $d$ is that $d_\eta$ does not satisfy a Stokes' theorem. We note two important formulas:

$$d_{\eta + df}\beta = e^f d_\eta(e^{-f}\beta)$$
$$\LL_X\beta = X \ip d_\eta\beta + d_\eta(X \ip \beta) + \eta(X)\beta$$

where $\LL_X$ is the Lie derivative along the vector field $X$. By passing to the covering space of $M$ defined by $\ker [\eta]$, it follows that the homology $H^*(\Omega^*M, d_\eta)$ is isomorphic to $H^*(M; [\eta], \R)$, the homology of $M$ with local coefficients defined by the homomorphism $[\eta]: \pi_1M \to \R$. 

\begin{note}
For concreteness, we will state definitions and prove propositions in the setup where a conformal symplectic structure is a pair $(\eta, \omega)$, and then when relevant show that the definitions are invariant under gauge equivalence $\eta \rightsquigarrow \eta + df$. By working directly with the complex $\Omega^*(M, E)$ for the flat line bundle $E$, the arguments are more elegant, but also they are likely more opaque.
\end{note}

\begin{definition}\label{def:basics}
Let $(M, \eta, \omega)$ be a conformal symplectic structure. We say that a submanifold $L \sse M$ is \emph{isotropic} if $\omega|_L = 0$, \emph{coisotropic} if $TL^{\perp \omega} \sse TL$, and \emph{Lagrangian} if it is isotropic and coisotropic. 

We say that $(\eta, \omega)$ is \emph{exact} if $\omega = d_\eta\lambda$ for some $\lambda \in \Omega^1M$. In this case we say that $\lambda$ is a \emph{Liouville form} for $(\eta, \omega)$, and the vector field $Z_\lambda$ defined by $Z_\lambda \ip \omega = \lambda$ is called the \emph{Liouville vector field}.\footnote{This usage is somewhat different from the standard symplectic case, where a Liouville form is required to be a primitive of $\omega$ and also satisfy a convexity condition near the boundary/infinite portion of $M$. The same conditions on $Z_\lambda$ make sense in this context, so they could easily be imposed.} If $L \sse M$ is a Lagrangian in $(M, \eta, d_\eta\lambda)$, we say that $L$ is \emph{exact} if $\lambda|_L = d_\eta h$ for some $h \in C^\infty L$.
\end{definition}

Notice that all of the above definitions are well defined up to gauge equivalence, because if we replace $(\eta, \omega)$ with $(\eta + df, e^f\omega)$, we can also replace $\lambda$ and $h$ with $e^f\lambda$ and $e^fh$. In particular, the Liouville vector field $Z_\lambda$ is well defined independent of gauge, it is \emph{not} rescaled by $e^f$. Note also that there is nothing precluding the existence of a closed, exact conformal symplectic manifold.

\begin{ex}\label{ex:confsymplectisation}
Let $(Y,\lambda)$ be a manifold with a $1$-form $\lambda$. We denote by $S^1_T$ the quotient $\mathbb{R}/T\mathbb{Z}$ and parametrise it with the coordinate $\theta$. On $S_T^1\times Y$ let $\eta = -d\theta$. Then $\omega =d_\eta\lambda =d\lambda + d\theta\wedge\lambda$, so $\omega$ is non-degenerate if and only if $\lambda$ is a contact form. Furthermore, given another contact form defining the same cooriented contact structure, $e^f\lambda$, we have that the conformal Liouville manifold $(S_T^1 \times Y, \eta, \lambda)$ is gauge equivalent to $(S_T^1 \times Y, \eta+df, e^f\lambda)$, which is conformal symplectomorphic to $(S_T^1\times Y, \eta, e^f\lambda)$ under the coordinate change $(\theta, y) \mapsto (\theta - f(y), y)$.

The minimal cover which makes $\eta$ exact is $\mathbb{R}\times Y\rightarrow S^1\times Y$, and the conformal Liouville structure $(\eta, \lambda)$ pulls back to $(-dt, \lambda)$, which is gauge equivalent to the exact symplectic structure $e^t\lambda$, known as the symplectization of $(Y,\ker\lambda)$. Hence, the conformal symplectic manifold will be called the \textit{conformal symplectization} of $(Y,\xi)$. We denote this manifold by $S^\conf_T(Y, \ker\lambda)$. (Notice that $\langle[\eta], S^1 \x \{\pt\}\rangle = -T$, so the choice of $T$ affect the conformal symplectomorphism type.) If the choice of $T$ is irrelevant, we will use the notation $S^\conf(Y, \ker\lambda)$.

Now, let $\Lambda \sse Y$ be a Legendrian submanifold. Then the cylinder $\mathbb{R}\times \Lambda$ is an exact Lagrangian in the symplectization of $(Y, \ker \lambda)$, and furthermore it is invariant under the covering transformations of $\mathbb{R}\times Y\rightarrow S^1\times Y$. Therefore it descends to an exact Lagrangian submanifold $S^1 \x \Lambda$ of $S^\conf(Y, \ker\lambda)$. More generally, if $\Sigma$ is a Lagrangian cobordisms from $\Lambda$ to itself which is cylindrical outside of $(0,T)\times Y$ then $\Sigma$ descends to a Lagrangian submanifold of $S^\conf_T(Y, \ker\lambda)$.
\end{ex}

This paper will focus primarily on the following example:

\begin{ex}\label{ex:cotangent}
Let $\beta$ be a closed $1$-form on a smooth manifold $Q$, let $\pi$ be the projection $T^*Q\rightarrow Q$, and let $\lambda_\std$ be the tautological form on $T^*Q$. Define $\eta:=\pi^*\beta$. Then $(T^*Q,\eta, \lambda_\std)$ is a conformal Liouville manifold (since $\eta\wedge \lambda_\std \wedge d\lambda_\std^{n-1}=0$). We will denote this conformal Liouville manifold by $T^*_\beta Q$.

Let $\alpha:Q \rightarrow T^*Q$ be a $1$-form. Then $\alpha^*\lambda_\std=\alpha$ and thus $\alpha(Q)$ is Lagrangian in $T_\beta^*Q$ if and only if $d_\beta \alpha = 0$. Furthermore this Lagrangian is exact if and only if $\alpha$ is $d_\beta$-exact.
\end{ex}

\subsection{Conformal symplectic transformations.}
\label{ssec:conf-sympl-transf}

A diffeomorphism between conformal symplectic manifolds $\p: (M_1, \eta_1, \omega_1) \to (M_2, \eta_2, \omega_2)$ is called a \emph{conformal symplectomorphism} if $\p^*\eta_2 = \eta_1 + df$ and $\p^*\omega_2 = e^f \omega_1$ for some $f \in C^\infty M_1$. We denote by $\operatorname{Symp}(M,\eta,\omega)$ the group of conformal symplectomorphisms of $(M,\eta,\omega)$. The Lie algebra $\mathfrak{S}\operatorname{ymp}(M,\eta,\omega)$ of this group is given by vector fields $X$ satisfying $\LL_X\omega=f\omega$ and $\LL_X\eta=df$ for some $f$, and using Cartan's formula we see this is equivalent to
\begin{align*}
d_\eta(X \ip \omega) + \eta(X)\omega &= \LL_X \omega = f \omega\\
d(\eta(X)) &= \LL_X\eta =df
\end{align*}

Therefore $X$ is a conformal symplectic vector field if and only if $d_\eta(X\ip \omega) = c\omega$ for a constant $c \in \R$. 

The subalgebra of $\mathfrak{S}\operatorname{ymp}(M,\eta,\omega)$ which are $\omega$-dual to $\eta$-closed $1$-forms (i.e. vector fields $X$ with $d_\eta(X\ip \omega) = 0$) will be called \textit{divergence free conformal symplectic vector fields}. Notice the following fact: a conformal symplectic manifold is exact if and only if it admits a conformal symplectic vector field which is not divergence-free. In this case, after choosing a Liouville form $\lambda$, we see that every conformal symplectic vector field is of the form $X + cZ_\lambda$, where $X$ is divergence-free.

\begin{definition}
The conformal symplectic vector fields which are dual to $\eta$-exact forms is called \emph{Hamiltonian vector fields}. For any function $H \in C^\infty M$, the Hamiltonian vector field $X_H$ associated to $H$ is defined by $X_H \ip \omega = - d_\eta H$. A conformal symplectomorphism of $M$ which is the time-$1$ flow of a path of Hamiltonian vector fields will be called a \textit{Hamiltonian diffeomorphism}.
\end{definition}

Notice that the association $H \rightsquigarrow X_H$ depends on $\eta$, but the algebra of Hamiltonian vector fields (and therefore the group of Hamiltonian diffeomorphisms) does not. Using $X_H^\eta$ to denote this dependence, we immediately have $X_H^\eta = X_{e^f H}^{\eta + df}$. To phrase the dependence in a gauge free way, let $E$ be the flat line bundle determined by $[\eta]$, then Hamiltonians are taken to be $H \in \Omega^0(M, E)$. Since $\omega \in \Omega^2(M, E)$, the equation $X_H \ip \omega = - d_E H$ defines $X_H$ unambiguously.\footnote{The reader experienced with contact geometry will be familiar with this situation: the contact vector field $X_H$ associated to a Hamiltonian $H \in C^\infty M$ depends on a choice of contact form, but the correct way to define the relationship without reference to a contact form is to take contact Hamiltonians $H \in \Omega^0(M, TM/\xi)$.}

\begin{definition}
Given a conformal symplectic structure with a chosen connection $\eta$, we define the \emph{Lee vector field} to be the Hamiltonian vector field generated by $H = 1$. We sometimes denote this vector field as $R_\eta = X_1$.
\end{definition}

\begin{ex}
Given a contact manifold $(Y, \ker \alpha)$, we defined the conformal symplectization in Example \ref{ex:confsymplectisation} by $(S^1 \x Y, \eta = -d\theta, \omega = d_\eta\alpha)$. In this case, the Lee vector field of $\eta$ is equal to the Reeb vector field of $\alpha$.
\end{ex}

Note that this exhibit a first difference with the symplectic case:  this flow has no fixed point for small time. This is an example of a more general phenomenon: given a conformal symplectic manifold $(M, \eta, \omega)$, if $\eta$ can be chosen to have no zeros, then $R_\eta$ is a Hamiltonian vector field with no zeros, and therefore we can construct many Hamiltonian diffeomorphisms with no fixed points. Another place where this condition is relevant is about Lagrangian displaceabality:

  \begin{ex}
    Given a cotangent bundle $T_\beta^*Q$ with Liouville structure given by a closed one form $\beta$ on $Q$ as in Example \ref{ex:cotangent}, the Liouville vector field $Z_\lambda$ is given by  the standard Liouville vector field on $T^*Q$. The Lee vector field $R_\eta$ corresponds to the symplectic vector field dual to $\eta$ (in the standard symplectic sense), i.e. its flow acts fiberwise, and translates the vertical fiber $T_q^*Q$ in the direction $\beta_q$. In particular, if $[\beta] \in H^1Q$ can be represented by a non-vanishing closed $1$-form, then there exists an autonomous Hamiltonian flow on $T^*_\beta Q$ so that $\p_H^t(Q) \cap Q = \es$ for \emph{any} $t>0$.
  \end{ex}

Given an exact conformal symplectic manifold $(M, \eta, d_\eta \lambda)$, there is some interplay between the vector fields $Z_\lambda$ and $R_\eta$. Always, we have $$\lambda(R_\eta) = \omega(Z_\lambda, R_\eta) = - \eta(Z_\lambda).$$ Furthermore, suppose that $X \in \ker d\lambda$. Since $\omega = d_\eta\lambda = d\lambda - \eta \wedge \lambda$, we get that $X \ip \omega = \lambda(X)\eta - \eta(X)\lambda$, and therefore $X = \lambda(X)R_\eta - \eta(X)Z_\lambda$ since $\omega$ is non-degenerate. It follows that, if $d\lambda$ has non-zero kernel, it must be equal to the span of $R_\eta$ and $Z_\lambda$. Plugging either $R_\eta$ or $Z_\lambda$ into the original equation, we see that $d\lambda$ has non-zero kernel if and only if $\eta(Z_\lambda) = -1$, if and only if $\lambda(R_\eta) = 1$. 

In the literature, exact conformal symplectic manifolds satisfying $\eta(Z_\lambda) = -1$ everywhere are often referred to as conformal symplectic manifolds \emph{of the first kind}. Notice that this condition is not gauge invariant, and therefore it can be difficult to tell when a given conformal symplectic manifold is gauge-equivalent to one satisfying this property. However, this condition has strong implications, as shown in \cite[Theorem A]{Bazzoni_conf}: if $M$ is compact and $\eta(Z_\lambda) = -1$ everywhere, and if $\{\eta = 0\}$ has at least one compact leaf, then $M$ is conformal symplectomorphic to the suspension of a strict contactomorphism of a contact manifold $Y$. 

\subsection{Moser's theorem and tubular neighborhoods}
\label{ssec:moser}
We have the following version of Moser's Theorem (first proved in \cite{Banyaga_propertieslcs}).

\begin{thm}\label{thm:Moser}
Let $(M, \eta)$ be a closed smooth manifold equipped with a closed $1$-form, and let $\{\omega_t\}_{t \in [0,1]}$, be a path of conformal symplectic structures in the same homology class, i.e. $\omega_t = \omega_0 + d_\eta\lambda_t$. Then there is an isotopy $\p_t: M \to M$ and a path of smooth functions $f_t \in C^\infty M$ so that $\p_t^*\eta = \eta + df_t$ and $\p_t^*\omega_t = e^{f_t}\omega_0$. That is, an exact homotopy of conformal symplectic structures is always induced by an isotopy (and gauge transformations)
\end{thm}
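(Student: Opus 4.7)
The plan is to adapt the classical Moser trick, with ordinary exterior differentiation replaced by $d_\eta$ wherever appropriate, and with the gauge-change function $f_t$ tracked explicitly via an auxiliary ODE. Since $M$ is closed, any time-dependent vector field integrates to a global isotopy, so the only real task is to write down a vector field $X_t$ that simultaneously produces the conformal symplectomorphism and the right exact gauge correction.

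First, using non-degeneracy of $\omega_t$, define $X_t$ by
\[
X_t \ip \omega_t = -\dot\lambda_t,
\]
and let $\p_t$ be its flow, with $\p_0 = \op{id}$. Then define $f_t \in C^\infty M$ by the pointwise ODE
\[
\dot f_t = \p_t^*\bigl(\eta(X_t)\bigr), \qquad f_0 = 0.
\]
The first claim, $\p_t^*\eta = \eta + df_t$, is then verified by checking equality at $t = 0$ and differentiating: since $\eta$ is closed, $\LL_{X_t}\eta = d(\eta(X_t))$, so $\tfrac{d}{dt}\p_t^*\eta = d\bigl(\p_t^*(\eta(X_t))\bigr) = d\dot f_t$, matching the right-hand side.

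For the main claim $\p_t^*\omega_t = e^{f_t}\omega_0$, I would again check $t=0$ and differentiate. The key input is the Cartan-type formula from the excerpt, $\LL_X\omega = X \ip d_\eta\omega + d_\eta(X\ip\omega) + \eta(X)\omega$, together with the fact that each $\omega_t$ is conformal symplectic, which forces $d_\eta\omega_t = 0$. Therefore
\[
\LL_{X_t}\omega_t = d_\eta(X_t \ip \omega_t) + \eta(X_t)\omega_t = -d_\eta\dot\lambda_t + \eta(X_t)\omega_t,
\]
and since $\dot\omega_t = d_\eta\dot\lambda_t$, the two $d_\eta$ terms cancel, giving
\[
\tfrac{d}{dt}(\p_t^*\omega_t) = \p_t^*(\LL_{X_t}\omega_t + \dot\omega_t) = \p_t^*\bigl(\eta(X_t)\bigr)\cdot \p_t^*\omega_t = \dot f_t\cdot \p_t^*\omega_t.
\]
This is precisely the ODE solved by $e^{f_t}\omega_0$ with initial condition $\omega_0$, proving the identity.

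The only genuine obstacle is ensuring the cancellation in the second step happens exactly, which is what dictates the choice of normalization $X_t \ip \omega_t = -\dot\lambda_t$ and is where the hypothesis $\omega_t = \omega_0 + d_\eta\lambda_t$ (rather than a more general path) is used; the hypothesis that $\eta$ is \emph{the same} closed form along the path is essential for $d_\eta\omega_t = 0$ and for being able to apply the Cartan-type formula uniformly. Everything else is formal bookkeeping about tracking $f_t$ through the flow, and closedness of $M$ disposes of integration-time issues.
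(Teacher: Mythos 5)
Your proof is correct and is essentially the paper's own argument: the Moser trick with $X_t$ defined by contracting $\dot\lambda_t$ into $\omega_t$, the gauge function $f_t$ recovered by integrating $\eta(X_t)$ along the flow, and the Cartan-type formula together with $d_\eta\omega_t=0$ producing the cancellation. Your sign choice $X_t\ip\omega_t=-\dot\lambda_t$ is in fact the one consistent with the paper's stated Lie-derivative formula (the paper writes $X_t\ip\omega=\dot\lambda_t$, an apparent sign slip), so nothing further is needed.
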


\begin{proof}
We find a vector field $X_t$ suitable for $X_t = \dot{\p_t} \circ \p_t^{-1}$, and then using compactness of $M$ integrate $X_t$. Differentiating the desired equations gives
$$\p_t^*\left(\LL_{X_t}\eta\right) = d\dot f_t$$
$$\p_t^*\left(\dot\omega_t + \LL_{X_t}\omega_t\right) = \dot f_t e^{f_t}\omega_0 = \dot f_t \p_t^*\omega_t.$$

Letting $\mu_t = \dot f_t \circ \p_t^{-1}$, we have
$$d(\eta(X_t)) = \LL_{X_t}\eta = d\mu_t$$
$$\dot\omega_t + d_\eta(X_t \ip \omega_t) + \eta(X_t)\omega_t = \mu_t\omega_t.$$

Defining $X_t$ by $X_t \ip \omega = \dot\lambda_t$, and taking $\mu_t = \eta(X_t)$, we see that this system has a solution.
\end{proof}

Just as in the symplectic case, the Moser theorem for conformal symplectic structures is used to prove a Weinstein neighborhood theorem for Lagrangians in conformal symplectic manifolds, showing that Example \ref{ex:cotangent} is a universal model. More precisely we have the following result, which appears in the recent paper \cite{Otiman_Stanciu_darboux}. It also follows from a more general version for coisotropic manifolds, appearing in \cite[Theorem 4.2]{VanLe_Oh_cositrop}.

\begin{thm}\label{thm:lag nbhd}
Let $(M,\eta,\omega)$ be a conformal symplectic manifold and $L\subset M$ a compact Lagrangian. Let $\beta=\eta|_L$. Then there exists a neighborhood $\mathcal{N} \sse M$ of $L$ so that $\mathcal{N}$ is conformally symplectomorphic to a neighborhood $U$ inside $(T^*_\beta L, \pi^*\beta, d_{\pi^*\beta}\lambda_\std)$, sending $L \sse \mathcal{N}$ to the zero section $\{p=0\} \sse U$.
\end{thm}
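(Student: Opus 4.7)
The plan is to mimic the classical Weinstein Lagrangian neighborhood theorem, inserting two modifications required by the conformal setting. First, I must normalize the Lee form $\eta$ as well as the $2$-form $\omega$, and these two normalizations are entangled by the gauge relation $(\eta,\omega) \rightsquigarrow (\eta+df, e^f\omega)$. Second, the interpolation step must use Theorem \ref{thm:Moser}, which varies $\omega$ while keeping $\eta$ fixed, so I will have to arrange $\eta$ correctly \emph{before} running Moser. The argument thus breaks into three stages: build a tubular embedding matching $\omega$ pointwise along $L$; gauge-transform so that $\eta$ agrees with $\pi^*\beta$ on the nose; and then interpolate the $2$-forms by the Moser argument.

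For the first stage I pick an almost complex structure $J$ compatible with $\omega$, so that $V := J(TL) \subset TM|_L$ is a Lagrangian complement to $TL$, and contraction with $\omega$ identifies $V \cong T^*L$ as vector bundles over $L$. The exponential map of any auxiliary metric then yields a diffeomorphism $\Phi$ from a neighborhood of the zero section in $T^*L$ onto a neighborhood of $L$ in $M$, whose differential along $L$ respects this splitting. Since $\lambda_\std$ vanishes at points of the zero section, so does $\pi^*\beta \wedge \lambda_\std$; a short linear algebra computation then shows that $\Phi^*\omega$ and $d_{\pi^*\beta}\lambda_\std$ agree pointwise along $L$. For the second stage, the $1$-form $\Phi^*\eta - \pi^*\beta$ is closed, vanishes along $L$, and lives on a neighborhood which deformation retracts onto $L$; hence it equals $dg$ for some $g$ with $g|_L = 0$, and applying the gauge transformation $(\Phi^*\eta, \Phi^*\omega) \rightsquigarrow (\pi^*\beta, e^{-g}\Phi^*\omega)$ reduces the problem to comparing two conformal symplectic structures sharing the same Lee form $\pi^*\beta$.

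For the third stage I need a twisted Poincar\'e lemma: any $d_{\pi^*\beta}$-closed form on the tubular neighborhood that vanishes on $L$ is $d_{\pi^*\beta}$-exact with a primitive that also vanishes on $L$. This follows from the Cartan formula for $d_\eta$ recalled in the paper, applied to the Euler vector field $V$ generating the radial contraction $\phi_s(q,p) = (q,sp)$: since $\pi^*\beta$ vanishes on $V$, the formula reduces to $\LL_V\alpha = d_{\pi^*\beta}(V\ip\alpha)$ on closed forms, and integrating $\tfrac{1}{s}\phi_s^*(V\ip\alpha)$ over $s \in (0,1]$ — which is absolutely convergent because $V \ip \alpha$ vanishes to first order along $L$ — produces the desired primitive $\mu$. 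Writing $e^{-g}\Phi^*\omega - d_{\pi^*\beta}\lambda_\std = d_{\pi^*\beta}\mu$ and interpolating $\omega_t := d_{\pi^*\beta}\lambda_\std + (1-t)d_{\pi^*\beta}\mu$, the argument of Theorem \ref{thm:Moser} produces a vector field $X_t$ with $X_t \ip \omega_t = -\mu$; since $\mu|_L = 0$ and $\omega_t$ is non-degenerate near $L$, $X_t$ vanishes on $L$, so by compactness of $L$ its flow is defined on $[0,1]$ in some smaller neighborhood and fixes $L$ setwise. Composing $\Phi$ with this flow and absorbing the resulting conformal factor into one last gauge transformation yields the desired conformal symplectomorphism. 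The main obstacle is the twisted Poincar\'e lemma together with its vanishing-on-$L$ refinement — without that refinement the Moser flow would not preserve $L$ — but once this is in hand the rest of the argument amounts to careful bookkeeping of gauge transformations.
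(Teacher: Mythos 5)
Your proof is correct and follows essentially the same route as the paper: match the two structures pointwise along $L$ via a tubular embedding, gauge-transform so the Lee forms agree, and run the Moser argument with a vector field vanishing on $L$. The only difference is that you make explicit (via the fiberwise radial contraction and the Cartan formula for $d_\eta$) the twisted Poincar\'e lemma with vanishing primitive, where the paper simply invokes homotopy invariance of the Lichnerowicz--De Rham complex and adjusts the primitive by a closed form.
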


\begin{proof}
Letting $\mathcal{N}$ be a tubular neighborhood of $L$, we have a diffeomorphism $\psi: T^*L \to \mathcal{N}$ sending the zero section $Z \sse T^*L$ to $L\sse \mathcal{N}$, so that $\psi^*\omega|_{T_ZT^*L} = d_{\pi^*\beta}\lambda_\std|_{T_ZT^*L}$. $\psi^*\eta$ and $\pi^*\beta$ are homologous, so by taking a gauge transformation on $\mathcal{N}$ we can assume that they are equal. 

Since $\psi^*\omega_1|_Z = 0$, therefore $\psi^*\omega = d_{\pi^*\beta}\lambda_1$ is exact (the Lichnerowicz-De Rham complex is homotopy invariant). We can furthermore assume that $\lambda_1|_{T_ZT^*L} = 0$ by addition of a $d_{\pi^*\beta}$-closed $1$-form. Let $\lambda_t = t\lambda_1 + (1-t)\lambda_\std$, and $\omega_t = d_\eta\lambda_t$. By taking a smaller neighborhood if necessary, we may assume that $\omega_t$ is non-degenerate for all $t \in [0,1]$.

We then run the Moser method, as above. The vector field $X_t$ obtained is zero along $Z$, and therefore by shrinking $\mathcal{N}$ further if necessary, we get a conformal symplectomorphism from a neighborhood of $Z \sse T^*_\beta L$ to $\mathcal{N}$. 
\end{proof}

\subsection{Flexibility and rigidity}\label{ssec:formal}

Given a closed manifold $M^{2n}$, we would like to know when $M$ admits a conformal symplectic structure. If so, how many distinct structures are there up to isotopy. There are some basic obstructions/invariants. For ease of notation we focus on the exact case.

\begin{definition}
Let $M^{2n}$ be a closed manifold. An \emph{exact almost conformal symplectic structure} (or EACS structure) is a pair $(a, w)$, where $a \in H^1(M; \R)$, and $w \in \Omega^2M$ is a non-degenerate $2$-form.
\end{definition}

Any exact conformal symplectic structure $(\eta, d_\eta\lambda)$ defines an almost conformal symplectic structure with $a = [\eta]$, $w = d_\eta\lambda$. Therefore, for a manifold to admit an exact conformal symplectic structure, it must also admit an EACS structure. Similarly, for two conformal symplectic structures to be isotopic, they necessarily must be homotopic through EACS structures (where $w$ is homotoped but $a$ is fixed). The benefit of passing to EACS structures is that they are classified purely by algebraic topology.

We show that that the classification of exact conformal symplectic structures is strictly more subtle than their almost conformal counterpart.

\begin{prop}\label{prop:nonflexible}
There exists a manifold $M$ of any dimension and and $ [\eta] \neq 0 \in H^1M$, which admits two exact conformal symplectic structures $(\eta, d_\eta\lambda_1)$ and $(\eta, d_\eta\lambda_2)$ which are homotopic through non-degenerate $2$-forms, but they are not conformally symplectomorphic.
\end{prop}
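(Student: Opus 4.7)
The plan is to use the conformal symplectization construction from Example \ref{ex:confsymplectisation} and distinguish the two structures via the underlying contact manifold. I would take $Y^{2n-1}$ (any $n \ges 2$) to be a closed manifold admitting two contact structures $\xi_1,\xi_2$ that are homotopic as almost contact structures but not contactomorphic; such $Y$ exist in every odd dimension $\ges 3$, for instance $Y = S^{2n-1}$ with the standard and an overtwisted contact structure in the same formal class. Let $\alpha_i$ be a contact form for $\xi_i$, and set $M = S^1 \x Y$, $\eta = -d\theta$, $\omega_i = d_\eta\alpha_i$. Then $(\eta,\omega_i)$ are two exact conformal symplectic structures on $M$ with Liouville form $\alpha_i$, both in the non-zero cohomology class $[\eta] = -[d\theta]$.

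First I would check that $\omega_1$ and $\omega_2$ are homotopic through non-degenerate $2$-forms. Each $\omega_i$ is tamed by the canonical almost complex structure $J_i$ on $M$ that restricts to a $d\alpha_i$-compatible complex structure on $\xi_i$ and sends $\dd_\theta$ to the Reeb vector field $R_{\alpha_i}$. Since $\xi_1$ and $\xi_2$ are homotopic as almost contact structures, $J_1$ and $J_2$ are homotopic as almost complex structures on $TM$, and selecting a continuous family of taming $2$-forms through this path yields the required homotopy.

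Second I would show the two structures are not conformally symplectomorphic. A direct calculation gives $Z_{\alpha_i} = \dd_\theta$, so $\eta(Z_{\alpha_i}) = -1$; since $\ker\eta$ has compact leaves $\{\theta\} \x Y$, each $(M,\eta,\omega_i)$ is \emph{of the first kind}, and by \cite[Theorem A]{Bazzoni_conf} is conformal symplectomorphic to the suspension of a strict contactomorphism of $(Y, \xi_i)$. A hypothetical conformal symplectomorphism $\p$ between the two structures would lift to the cyclic cover $\wt M = \R \x Y$ as a $\Z$-equivariant map that, after the standard gauge transformation putting each lifted structure into the symplectization form $d(e^t\alpha_i)$, becomes a symplectomorphism of the two symplectizations intertwining the time-$T$ Liouville flows of $\dd_t$. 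Analyzing its behavior as $t \to +\infty$ should then extract a contactomorphism $(Y,\xi_1) \to (Y,\xi_2)$, contradicting our choice.

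The hard part is this last extraction: ensuring that the contact manifold appearing in Bazzoni's suspension decomposition is an invariant of the conformal symplectic structure $(\eta,\omega)$ alone, not merely of the triple together with a first-kind primitive. The cleanest route is to track how a conformal symplectomorphism transforms Liouville primitives via a relation of the form $\p^*\lambda_2 = e^f(\lambda_1 + \gamma)$ for some $d_\eta$-closed $\gamma$, and then to verify that the resulting $\Z$-equivariant symplectomorphism of the two standard symplectizations descends in the limit $t \to +\infty$ to a contactomorphism of the contact boundary at infinity.
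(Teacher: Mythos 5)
Your construction and the first half of your argument coincide with the paper's: $M=S^1\times S^{2n-1}$, $\eta=-d\theta$, Liouville forms $\alpha_1$ standard and $\alpha_2$ overtwisted in the same formal class, and the homotopy of non-degenerate $2$-forms induced by the homotopy of almost contact structures. The gap is in the step you yourself flag as ``the hard part,'' and it is a genuine one: you propose to extract, from the lifted $\Z$-equivariant symplectomorphism of the two symplectizations, a contactomorphism $(Y,\xi_1)\to(Y,\xi_2)$ ``at infinity.'' But the contact manifold is \emph{not} an invariant of its symplectization: by a theorem of Courte there exist closed contact manifolds which are not even diffeomorphic yet have exact symplectomorphic symplectizations. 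The equivariance you retain is only that the lift commutes with the time-$T$ Liouville translation (the deck transformation), not with the full $\R$-action, which is far too weak to force a descent to the quotient contact manifold; no soft argument will produce that descent. The appeal to Bazzoni's suspension theorem does not help either: it describes each structure separately but says nothing about the contact fibre being preserved under a conformal symplectomorphism.

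The paper sidesteps this issue entirely. A conformal symplectomorphism must pull back $[\eta]$ to $[\eta]$, hence lifts to the minimal cover where the two structures become the honest symplectizations; so it suffices to show these are not symplectomorphic \emph{as symplectic manifolds}, and for the specific pair $(\xi_\std,\xi_\ot)$ this is done by a fillability/embedding obstruction: the symplectization of $(S^{2n-1},\xi_\std)$ is fillable (essentially $\C^n\sm\{0\}$) and cannot embed symplectically into the symplectization of the overtwisted sphere --- this is where genuine holomorphic-curve input about overtwistedness enters. To repair your proof, replace the ``contactomorphism at infinity'' step by a direct symplectic invariant distinguishing the two symplectizations, e.g.\ fillability, or the presence versus absence of closed exact nulhomologous Lagrangians as in the remark following the proposition.
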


\begin{proof}
We take $M = S^1 \x S^{2n-1}$. Let $\xi_\ot = \ker \alpha_\ot$ be an overtwisted contact structure (see \cite{BEM}) on $S^{2n-1}$, which is homotopic through almost contact structures to the standard contact structure $\xi_\std$. 

As the symplectization of $(S^{2n-1},\xi_\ot)$ is different form the one of $(S^{2n-1},\xi_\std)$ (actually the second cannot embed in the first as $(S^{2n-1},\xi_\std)$  is fillable) we know that those two structure are not equivalent.
 
Moser's theorem therefore implies that they are not homotopic through exact conformal symplectic structures. But they are homotopic through non-degenerate $2$-forms, since the original contact structures are homotopic through almost contact structures.
\end{proof}

\begin{remark}
  As shown in \cite{Murphy_closedlagsymplectisation}, when $n>2$ the symplectization $(\R \x S^{2n-1}, e^r\alpha_\ot)$ contains an exact Lagrangian, $L \sse \R \x S^{2n-1}$. Let $R > 0$ be a constant so that $L \sse (0, R) \x S^{2n-1}$. Then by identifying $S^1 = \R / R\Z$ $L$ embeds as an exact Lagrangian into the conformal symplectization $(M, \eta = -d\theta, \lambda_1 = \alpha_\ot)$.

However, there can be no nulhomologous exact Lagrangian in the conformal symplectization of the standard contact structure, $(S^1 \x S^{2n-1}, \eta, \lambda_2 = \alpha_\std)$. For if there was, it would lift to a compact exact Lagrangian in the universal cover, which is conformally symplectomorphic to $\C^n \sm \{0\}$, which then contradicts Gromov's theorem. This shows that those two conformal symplectic structure can be distinguished by their Lagrangian submanifolds. 
\end{remark}
For $2n = 2$ we know that conformal symplectic structures are equivalent to almost conformal symplectic structures, since all $2$-forms are exact when $[\eta] \neq 0$. 

On the question of existence, a nearly complete answer was established in \cite{E_M_Symp_Cob}:

\begin{thm}
Let $M$ be a closed manifold with an EACS structure $(a, w)$, where $a \neq 0$ is in the image $H^1(M; \Z) \to H^1(M, \R)$. Then for all sufficiently large $C \in [1, \infty)$, there is an exact conformal symplectic structure $(\eta, d_\eta\lambda)$, with $[\eta] = Ca$ and $d_\eta\lambda$ being homotopic to $w$ through non-degenerate $2$-forms.

In particular, given any closed manifold $M$ satisfying $H^1(M; \R) \neq 0$ and having a non-degenerate $2$-form, $M$ admits an exact conformal symplectic structure.
\end{thm}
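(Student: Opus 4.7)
The plan is to reduce the existence statement to the main theorem of \cite{E_M_Symp_Cob} on symplectic cobordisms by cutting $M$ open along a codimension-one submanifold Poincar\'e dual to $a$. First I would observe that since $a$ is in the integral image, there is a smooth map $f: M \to S^1$ with $f^*[d\theta] = a$. After a small perturbation we may take $f$ to be a submersion in a neighborhood of some regular value $* \in S^1$, and set $V := f^{-1}(*)$. Cutting $M$ along $V$ produces a compact manifold $W$ with two boundary components $V_\pm$, both canonically identified with $V$, so that $M$ is recovered as the quotient of $W$ by identifying $V_+$ with $V_-$. The given non-degenerate 2-form $w$ restricts on $W$ to an almost symplectic structure; together with a choice of collar on $\partial W$ this provides the almost-complex cobordism data needed to apply the Eliashberg--Murphy existence theorem.

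Next I would apply the main result of \cite{E_M_Symp_Cob}: for $C$ sufficiently large, one can construct an exact symplectic form $d\mu$ on $W$ whose Liouville form $\mu$ is positively convex at $V_+$ and negatively convex at $V_-$, such that $\mu|_{V_+} = e^C\,\mu|_{V_-}$ after the identification $V_+ \cong V_- \cong V$, and such that $d\mu$ is homotopic through non-degenerate 2-forms to $w|_W$. The role of large $C$ is precisely to create enough room for the flexibility theorems (overtwisted contact structures, loose Legendrians) that drive the construction in \cite{E_M_Symp_Cob}: scaling the convex end by $e^C$ enlarges the ambient Liouville manifold enough for the target contact structure at $V_+$ to be made isomorphic to the one at $V_-$ via a flexible h-principle argument.

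Given such $d\mu$ on $W$, I would reassemble the conformal symplectic structure on $M$. Let $\eta_0$ be a closed 1-form on $M$ representing $a$ with $\eta_0 = f^*d\theta$ near $V$, and set $\eta := C\eta_0$. On the infinite cyclic cover $\widetilde M \to M$ associated to $a$, $\eta$ becomes $d(C\theta)$ for a global function $C\theta$, and $\widetilde M$ is diffeomorphic to the infinite cylinder obtained by stacking copies of $W$. The form $e^{-C\theta}\widetilde\mu$, assembled from rescaled copies of $\mu$ on each fundamental domain, is invariant under the deck transformation by the scaling relation $\mu|_{V_+} = e^C\mu|_{V_-}$, and therefore descends to a global 1-form $\lambda$ on $M$. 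By construction $d\lambda - \eta \wedge \lambda$ pulls back to $e^{-C\theta}d\widetilde\mu$, which is non-degenerate, so $(\eta, d_\eta\lambda)$ is the desired exact conformal symplectic structure, and $d_\eta\lambda$ is homotopic to $w$ through non-degenerate 2-forms since the homotopy can be assembled equivariantly on $\widetilde M$.

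The main obstacle is the invocation of \cite{E_M_Symp_Cob} to realize the scaling condition $\mu|_{V_+} = e^C\mu|_{V_-}$ at the boundary: we need a symplectic cobordism whose positive contact boundary is conformally equivalent to a prescribed rescaling of the negative boundary, within the homotopy class of $w$. Everything else is a mostly formal cut-and-paste construction. For the ``in particular'' statement, given $H^1(M;\R)\neq 0$, the lattice $H^1(M;\Z)$ has positive rank, so we may choose any nonzero integral class $a$, apply the main statement, and absorb the multiplicative constant $C$ into the choice of Lee form; the hypothesis that $M$ carries a non-degenerate 2-form then supplies the needed $w$.
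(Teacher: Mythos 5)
Your proposal follows essentially the same route as the paper: cut $M$ along a non-separating hypersurface Poincar\'e dual to $a$, invoke the main existence theorem of \cite{E_M_Symp_Cob} to make the resulting almost symplectic cobordism genuinely symplectic with the same contact structure on both ends (so the Liouville forms match up to the overall conformal factor $e^C$), and reglue by descending the rescaled Liouville form from the infinite cyclic cover. One sign quibble in your reassembly step: with $\eta = C\,f^*d\theta$ the form that descends correctly is $e^{+C\theta}\widetilde\mu$ rather than $e^{-C\theta}\widetilde\mu$, since $d_{df}\lambda = e^{f}d(e^{-f}\lambda)$ --- as written, $d\lambda - \eta\wedge\lambda$ acquires an extra $-2C\,e^{-C\theta}d\theta\wedge\widetilde\mu$ term --- but this is repaired by flipping the orientation convention on $\theta$ or swapping the roles of $V_\pm$.
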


This follows indeed from the existence result of symplectic structure on almost symplectic cobordisms by cutting along an hypersurface Poincaré dual to $[\eta]$ which is non-separating by hypothesis. Making then the cobordism symplectic between the same contact structure on the ends leads to a symplectic cobordisms whose ends can be identified to give a conformal symplectic manifold.

It is an open question whether we can in general construct conformal symplectic structures where $[\eta]$ is a prescribed $1$-form, either in the case where $[\eta]$ is small, or when $[\eta]: \pi_1M \to \R$ has non-discrete image. Note that in \cite{Apo_Dlou}, there is one example that shows that after fixing a complex structure J, the set of $[\eta]$ for which there exists a conformal symplectic structure compatible with $J$ consists of a single point. 

\begin{remark}
From this theorem, it is easy to construct non-exact conformal symplectic structures: given an exact symplectic structure $(\eta, d_\eta\lambda)$ and a class $b \in H^2(M; [\eta], \R)$, choose a $d_\eta$-closed $2$-form $B \in \Omega^2M$ representing $b$. Then for sufficiently large $C$, $\omega = Cd_\eta\lambda + B$ is a conformal symplectic structure.
\end{remark}

\begin{remark}
The same strategy also works for constructing conformal symplectic manifolds with convex contact boundary: any contact manifold which admits an almost complex filling admits a conformal symplectic filling. Indeed from such a formal filling the main theorem in \cite{E_M_Symp_Cob} allows us to construct a symplectic cobordisms from $(S^{2n-1},\xi_\ot)$ to $(S^{2n-1},\xi_\ot)\sqcup (Y,\xi)$, and gluing the two $(S^{2n-1},\xi_\ot)$ leads to a conformal symplectic filling of $(Y, \xi)$.

As stated there, the theorem only applies to cobordisms with tight convex boundary when $n > 2$. However, after an examination of the proof there it is clear that the result applies when $n=2$ as well, as long as a single component of the convex boundary is overtwisted.
\end{remark}

\subsection{Pseudo-holomorphic curves and a ``failure'' of Gromov compactness.}
\label{sec:failure-compactness}

Inspired from successes in symplectic and contact geometry, we might try to develop a theory of pseudo-holomorphic curves for conformal symplectic manifolds. Similar to the symplectic case, in a conformal symplectic manifold the space of compatible almost complex structures $J$ is contractible, and the equation $\ol{\dd}_J u = 0$ is elliptic. But when trying to prove a compactness theorem, we run into an immediate problem: the Lichnerowicz-De Rham complex does not satisfy Stokes' theorem, and without this we have no way to give a $C^0$ bound on holomorphic energy. We give here two examples of holomorphic curves arising naturally which suggest bad compactness properties.

Consider the conformal sympectisation of an overtwisted sphere $Y = S^{2n-1}_\ot$ with a generic contact form. When equipped with a cylindrical almost complex structure $J$, there is a holomorphic plane which is asymptotic to one of the Reeb orbits, as shown in \cite{Albers_Hofer}. In the conformal symplectization $S^\conf(Y)$ this gives a holomorphic plane, which looks something like a leaf of a Reeb type foliation. Though such curves are handled in the symplectization setting by SFT compactness \cite{Bourgeois_&_Compactness} and \cite{Abbas_book}, this sort of compactness strongly relies on having contact asymptotics and a cylindrical complex structure, which is not a natural notion in general conformal symplectic manifolds.

A second phenomena is given by the conformal symplectic filling of $S^3_{\ot}$ described in the previous section. The bishop family near the elliptic point of an overtwisted disk leads to a family of holomorphic curves which cannot be compactified in a standard way. We suspect that such a family breaks into curves involving half-plane such as those described before.

\subsection{Contactisation, reduction, and generating functions}
\label{ssec:cont-red-gen}

\begin{definition}\label{def:contactization}
If $(M,\eta,\lambda)$ is a conformal Liouville manifold, we define the \emph{contactisation} of $(M, \eta, \lambda)$ to be the manifold $M\times \mathbb{R}$ equipped with the contact structure given by the kernel of $\alpha=dz-z\eta-\lambda=d_\eta z-\lambda$ (note that $\alpha$ is a contact form if and only if $d_\eta \lambda$ is non-degenerate).
\end{definition}

Notice that an exact Lagrangian submanifold $L$ lifts to a Legendrian submanifold $\widetilde{L}= \{(q,f(q)); q \in L\}$ where $f$ is such that $d_\eta f=\lambda|_L$.

In the cotangent case, the space $\mathcal{J}^1_\beta(Q):=T^*_\beta Q\times \mathbb{R}$ with the contact form $d_\beta z-\lambda$ is called the \emph{$\beta$-jet bundle}, natural Legendrian submanifolds arise as $\beta$-jets of functions $f$, i.e. $j^1_\beta f(q) = (q,d_\beta f_q,f(q))$. Note that $\mathcal{J}^1_\beta(Q)$ is actually contactomorphic to $\mathcal{J}^1(Q)$ by the map $\p(q,p,z) = (q,p-z\beta_q,z)$. However, this contactomorphism is not compatible with the projection to $T^*Q$ and therefore does not identify the symplectic properties of $T^*Q$ with the conformal symplectic properties of $T^*_\beta Q$. Rather, it maps $\beta$-jets of functions to (regular) jets of functions.

The Reeb vector field of $\alpha$ is given by $R_\alpha=(1+\lambda(R_\eta))\partial_z-R_\eta$.

\begin{ex}
  Let $(M,\alpha)$ be a contact manifold. Then the contact form on $M\times S^1\times\mathbb{R}$ is $dz-zd\lambda-\alpha$ the Reeb vector field is just the original Reeb vector field seen as vector fields invariant by $S^1\times\mathbb{R}$.
  The contactization of a conformal symplectization is a contact manifold associated to $(M,\ker \alpha)$, this manifold is the product of the original contact manifold and $T^*S^1$ with the Liouville structure given by $pd\theta-dp$.
\end{ex}

Let $(M,\eta,\omega)$ be a conformal symplectic manifold, and let $N\subset M$ be a coisotropic submanifold (i.e. such that $TN^{\perp \omega}\subset TN$). Since $d\omega=\eta\wedge \omega$ it follows from the Fröbenius integrability theorem that the distribution $\ker(\omega|_N)$ is integrable. Note that for a vector field $X$ in $\ker \omega|_N$ we have that
\begin{align}\label{eq:3}
 \LL_X(\omega|_N)&=\eta(X)\omega \\
\LL_X\eta&=d(\eta(X)).\nonumber
\end{align}

Let $N^\omega$ be the leaf space of the associated foliation and assume that $N^\omega$ is a manifold. It follows from equation \eqref{eq:3} that on charts of $N^\omega$ there is a well define conformal symplectic structure and thus that $N^\omega$ is a conformal symplectic manifold. This conformal symplectic manifold is called the \textit{conformal symplectic reduction} of $N$.

\begin{ex}
  Let $Q,Q'$ be two manifolds and $\beta,\beta'$ closed one forms on $Q,Q'$. And let $T^*_{\beta\oplus\beta'}(Q\times Q')\simeq T^*_\beta Q\times T_{\beta'}Q'$ be the conformal symplectic manifold associated to $\pi^*\beta\oplus(\pi')^*\beta'$. Then $T^*_\beta Q\times Q'_0$ is cositropic and its reduction gives back $T^*_\beta Q$.
\end{ex}

Consider a Lagrangian submanifold $L$ of $(M, \eta, \omega)$ and assume the coisotropic $N$ intersects $L$ transversely. Let $\p$ be the projection $N\rightarrow N^\omega$. Then the manifold $L_N=\p(L\cap N)$ is an immersed Lagrangian submanifold of $N^\omega$. (This is linear algebra and thus equivalent to the symplectic case and follows from\cite[Section 5.1]{Bates_Wein} for instance).

Now given a map $F:Q\times Q'\rightarrow \mathbb{R}$, we can take the graph to get a Lagrangian section $d_{\beta\oplus\beta'}F$ of $T^*_{\beta \oplus \beta'}(Q \x Q')$, and apply symplectic reduction as above to the coisotropic $T^*_\beta Q \x Q'_0$ to obtain an immersed exact Lagrangian submanifold of $T_{\beta}Q$. When $Q'=\mathbb{R}^m$ (and $\beta'=0$) and $F$ is quadratic at infinity we denote the corresponding Lagrangian $L_F$. $F$ is called a \textit{$\beta$-generating family} for $L_F$.

Note that $L_F$ lifts to a Legendrian submanifold $\Lambda_F$ of $\mathcal{J}_\eta^1(Q)$ and through the contactomorphism $\phi$ defined above we get a Legendrian submanifold of $\mathcal{J}^1(Q)$, and $F$ is a generating family (in the standard sense) for $\phi(\Lambda_F)$. Chekanov persistence's Theorem \cite{Chekanov_Quasifunctions_Generating} therefore implies that if $L$ in $T_\beta^*Q$ admits a generating family and $L_t$ is an isotopy of $L$ through exact Lagrangians then $L_1$ admits a $\beta$-generating family as well.

We remark that if $L$ is given by a $\beta$-generating family $F:Q\times \R^{m}$ then zeros of $d_\eta F$ (called $\eta$-critical points of $F$) corresponds to intersection points between $L$ and the zero section. All together, this implies

\begin{thm}\label{stablecrit}
Let $\beta$ be a closed $1$-form on a closed manifold $Q$, and define $\op{Stab}_{\beta}(Q) = \min\limits_F \#\{q \in Q | d_\beta F(q)=0\}$, where $F$ is taken among all functions $F:Q\times \R^m\rightarrow \mathbb{R}$ which are quadratic at infinity, for all $m \in \mathbb{N}$. That is, $\op{Stab}_\beta(Q)$ is the stable $\beta$-critical number of $Q$. Let $L \sse T^*_\beta Q$ be a Lagrangian which is Hamiltonian isotopic to the zero section $Z$. Then the number of intersections between $L$ and $Z$ is at least $\op{Stab}_{\beta}(Q)$.
\end{thm}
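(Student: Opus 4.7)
My plan is to reduce the intersection bound to the existence of a $\beta$-generating family for $L$, exploiting the correspondence recalled in the excerpt between intersection points of $L_F$ with the zero section and zeros of $d_\beta F$. The argument breaks into three steps: (i) produce a $\beta$-generating family for $Z$ itself, (ii) propagate it to $L$ by persistence, (iii) convert the intersection count into a count of $\beta$-critical points. Step (i) is immediate: take $F_0 : Q \x \R \to \R$, $F_0(q,x) = x^2$, which is quadratic at infinity. Its fiber critical locus $\{\partial_x F_0 = 0\}$ equals $Q \x \{0\}$, and at those points $\partial_q F_0 - F_0 \beta_q = 0$, so $L_{F_0} = Z$.

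For step (ii) I first note that any Hamiltonian isotopy $\psi_t$ of a conformal symplectic manifold preserves exactness of Lagrangians: if $L_0$ is exact with primitive $h_0$ and $\psi_t$ is generated by $H_t$, then a primitive $h_t$ of $\lambda|_{\psi_t(L_0)}$ is produced by integrating an ODE along the orbits of $X_{H_t}$, using the formula $X_H \ip \omega = -d_\eta H$ together with the Cartan-type identity for $\LL_X$ on $d_\eta$-closed forms from Section \ref{sec:introduction}. Thus $L_t = \psi_t(Z)$ is an isotopy of exact Lagrangians from $Z$ to $L$. The persistence statement recalled in the excerpt just above the theorem now applies and produces a $\beta$-generating family $F : Q \x \R^m \to \R$ for $L$, quadratic at infinity. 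Concretely, one lifts $L_t$ to a Legendrian isotopy $\wt L_t \sse \J^1_\beta(Q)$, transports it through the contactomorphism $\p : \J^1_\beta(Q) \to \J^1(Q)$ to an isotopy in the standard jet bundle — which starts at $\p(\wt Z)$, for which $F_0$ remains an ordinary generating family — applies Chekanov's classical persistence theorem to extend the generating family along the isotopy, and pulls the result back through $\p$.

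For step (iii), the excerpt records that intersection points of $L = L_F$ with $Z$ correspond bijectively to zeros of $d_\beta F$ on $Q \x \R^m$, and therefore
\[
\#\{L \cap Z\} \;=\; \#\{(q,x) \in Q \x \R^m : d_\beta F(q,x) = 0\} \;\ges\; \op{Stab}_\beta(Q)
\]
by the very definition of the stable $\beta$-critical number. I expect the main technical content to live in step (ii): verifying that $\p$ genuinely intertwines $\beta$-generating families with ordinary generating families at the Legendrian level, and that Hamiltonian isotopies of $T^*_\beta Q$ lift to contact isotopies of $\J^1_\beta(Q)$ along which this correspondence is preserved. Once these compatibilities are in hand, the remaining steps are formal, and the theorem follows.
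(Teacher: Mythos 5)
Your proposal is correct and follows essentially the same route as the paper: the paper likewise starts from a generating family for the zero section, lifts to $\mathcal{J}^1_\beta(Q)$, transports through the contactomorphism $\p(q,p,z)=(q,p-z\beta_q,z)$ to invoke Chekanov's persistence theorem, and then identifies intersection points of $L_F$ with $Z$ with zeros of $d_\beta F$. The details you supply (the explicit family $F_0(q,x)=x^2$ for $Z$ and the preservation of exactness under Hamiltonian isotopy) are exactly the points the paper leaves implicit.
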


Form the previous theorem we find that in order to have a lower bounds on the number of intersection points of a deformation a the $0$-section in a cotangent bundle we need to estimate the number of $\beta$-critical points of a generating family $F$. To get such estimates we study the Morse-Novikov homology of a deformation of the function $\ln |F|$.

\section{Overview of Morse-Novikov homology}
\label{sec:overv-morse-novik}

In this section we present the basics of the construction of the Morse-Novikov complex associated to a closed $1$-forms on $Q$. We also recall its essential properties we need to prove Theorem \ref{thm:rigidprinc}. We refer the reader to more comprehensive references like \cite{Farber_oneforms} and \cite{Pajitnov_circledvalued} for more details.

We assume that $Q$ is connected. A closed $1$-forms $\eta$ is \textit{Morse} if near each of its zeros it is the derivative of a non-degenerate quadratic form $h$. It follows from Morse's Lemma that any $1$-forms whose graph intersect the $0$-section transversely is Morse, therefore closed $1$-forms are generically Morse. We refer to a zero of a closed $1$-form $\eta$ as a \textit{critical point} of $\eta$. Given a critical point $q$ of $\eta$ the number of negative eigenvalues of the quadratic form $h$ such that near $q$, $\eta=dh$ is independent of the coordinate system, this number is called the \textit{index} of $q$ written $I_\eta(q)$. 

Given a metric $g$ on $Q$ we define the vector field $\nabla\eta$ to be the dual of $\eta$ with respect to $g$. We denote by $x_t^\eta$ the flow of $\nabla\eta$. We will always assume that there is a compact subset $K$ such that 
\begin{itemize}
\item The frontier of $K$ is a manifold with corners.
\item All critical points of $\eta$ are in $K$ (in particular there are finitely many critical points),
\item $\nabla\eta$ is complete, so $x_t^\eta$ is defined for all $t \in \R$,
\item For every component $V$ of $Q\setminus K$ either for all $q\in V$ and $t\geq 0$ $x_t^\eta(q)\not\in K$ or for all $q\in V$ and $t\leq 0$ $x_t^\eta(q)\not\in K$. 
\end{itemize}
Any critical points $q$ have stable and unstable manifolds $W^s(q)$ and $W^u(q)$ defined by
$W^s(q)=\{q'|\lim_{t\rightarrow \infty}x_t^\eta(q')=q\}$ and $W^u(q)=\{q'|\lim_{t\rightarrow -\infty}x_t^\eta(q')=q\}$.

Note that $W^u(q)$ is a disk of dimension $I_\eta(q)$ and $W^s(q)$ is a disk of dimension $n-I_\eta(q)$. We say that the pair $(\eta,g)$ is \textit{Morse-Smale} if for any critical points $q,q'$ of $\eta$ the disks $W^u(q)$ and $W^s(q')$ intersects transversely. In this situation $W^s(q)\cap W^u(q')$ are open manifolds of dimension $I_\eta(q)-I_\eta(q')$ with an action of $\mathbb{R}$ on them which is free (unless $q=q'$).

For a ring $R$ we denote by $\Lambda_\eta(Q,R)$ the completion of the group ring $R[\pi_1(Q)]$ given by series of the form $\sum_ia_ig_i$ where $\lim_{i\rightarrow\infty}\int_{g_i}\eta=-\infty$.
 
Now for any critical points $q$ of $\eta$ we choose 
\begin{enumerate}
\item A path $g_q$ from $q$ to the base point.
\item An orientation of the tangent space of $q$ (we do not assume $Q$ is orientable).
\end{enumerate}

Given two critical points $q,q'$ of $\eta$ such that $I_\eta(q)-I_\eta(q')=1$ then any component $\gamma$ of $W^s(q)\cup W^u(q')$ is copy of $\mathbb{R}$ which compactifies to a path from $q$ to $q'$ which, together with the capping paths $g_q$ and $g_{q'}$, gives an element $g_\gamma$ of $\pi_1(Q)$.

This gives the series $u_{q,q'}=\sum_\gamma\pm g_\gamma$, which is in $\Lambda_\eta(Q,R)$ since we follow negative trajectories of $\nabla\eta$. The sign $\pm$ is determined by whether or not the chosen orientation of $T_qQ$ transports to the one of $T_{q'}Q$ along the path $\gamma$.

The Morse-Novikov complex of $(\eta,g)$ is given by $C_k(\eta,R)=\oplus_{I_\eta(q)=k}\Lambda(Q,R)\langle q\rangle$ with differential $d(q)=\sum_{I_\eta(q')=k-1}u_{q,q'}q'$. We have that $d^2=0$ and the homology of $(C_*,d)$ is called the Morse-Novikov homology of $\eta$, denoted $H^{\nov}_*(\eta)$.

When $Q$ is compact it follows from a Theorem of Sikorav in \cite{Sik_Thesis} (see also \cite{Poz_Thesis}) that this homology is the homology of $Q$ with local coefficients in $\Lambda_\eta(Q, R)$. Notice that if $Q$ is compact with boundary and if $\nabla \eta$ points inward the boundary then the same results holds.

When $R$ is a principal ring, $\Lambda_\eta (Q,R)$ is a principal ring as well and we call the $k-th$ \textit{Novikov-Betti number} $b^k_\eta$ of $\eta$ the rank of the free part of $H^{\nov}_k(\eta)$. For compact oriented closed manifold they satisfy $b^k_\eta=b^{n-k}_\eta$ (see \cite[Corollary 2.9]{Pajitnov_circledvalued} and \cite[Section 1.5.3]{Farber_oneforms}), for non-oriented on the same is true if $R=\mathbb{Z}_2$. 

Note that $\Lambda_\epsilon(Q,R)$ is a module over $R[\pi_1(Q)]$, the Morse-Novikov homology of $\eta$ is isomorphic to the homology with local coefficient in $H_*(K,\partial_- K;\Lambda_\epsilon(K,R))$ (see \cite{Latour}) where $\partial_- K$ (resp $\partial_+ K$) denotes the points where $\nabla \eta$ points outward (resp. inward) $K$.

 \section{Rigidity of Lagrangian intersection}
\label{sec:rigid-lagr-inters}
We are now ready to prove Theorem \ref{thm:rigidprinc}.

\begin{proof}[Proof of Theorem \ref{thm:rigidprinc}]
From Theorem \ref{stablecrit} it remains to find a lower bound of the number of zeroes of $dF-F\beta$ where $F$ is a function on $Q\times\mathbb{R}^m$, so that there is a compact ball $B \sse \R^m$ such that outside of $Q\times B$, $F(x,\xi)=q(\xi)$ where $q$ is a non-degenerate quadratic form of index $k$. Because we can stabilize $F$ we can assume that $k>0$ without losing generality. Up to a small perturbation not affecting the zeroes we can also assume that $\beta$-critical values of $F$ are not $0$. Fix a metric $g$ on $Q$ inducing the product metric $g\oplus g_0$ on $Q\times \mathbb{R}^m$, we denote by $\nabla: T^*(M\times \mathbb{R}^m)\rightarrow T(M\times \mathbb{R}^m)$ the map induced by this metric.

Choose $\varepsilon>0$ such that for all $(x,\xi)$ such that $|F(x,\xi)|<2\varepsilon$ we have $d_\beta F(x,\xi)\not=0$ and $dF(x,\xi)\not=0$. Let $X_+:=\{(x,\xi)|F(x,\xi)\geq\varepsilon\}$, $X_-:=\{(x,\xi)|F(x,\xi)\leq-\varepsilon\}$ and $X_0:=\{(x,\xi)|-\varepsilon\leq F(x,\xi)\leq \varepsilon\}$.

Let $G$ be a function on $Q\times \mathbb{R}^N$ so that $G(x,\xi)=|F(x,\xi)|$ outside of $X_0$, depends only on $\xi$ outside of $Q\times B$ and $G(x, \xi) > 0$ everywhere. Note that outside of $Q\times B$ the gradient vector field of $d_\beta|F|$ is transverse to $\partial X_\pm$ so we can ensure that $\nabla(dG - G\beta) \pitchfork \dd X_0$. 

We then let $H := \log G$, and notice that the zeros of $\gamma = dH - \beta$ are the same as the zeros of $G(dH - \beta) = dG - G\beta$. Therefore, the zeros of $dF - F\beta$ are the same as the zeros of $\gamma$ on $X_+ \cup X_-$, though $\gamma$ will also have additional zeros in $X_0$. Since $\gamma$ is closed and $[\gamma] = [\beta]$ we can now relate this to $H^\nov_*(Q, [\beta])$.

At first, we work with $H^\nov_*$ as homology with local coefficients in $\Lambda_\beta(Q, \F)$, rather than Morse-Novikov homology. Throughout the next section, $H_*^{\nov}$ will always be taken in the homology class $[\beta]$. Let $r = \op{rk}(H_*^\nov(Q, [\beta]))$, and consider the exact sequence of the pair $(X_0, X_0 \sm Q \x B)$.

$$
\xymatrix{H_*^\nov(X_0, X_0 \sm Q \x B) \ar[r]^{[-1]} &H_*^\nov(X_0 \sm Q \x B) \ar[d]\\
&H_*^\nov(X_0) \ar[ul]&}  
$$

Outside of $B$, we have that $F(x, \xi) = q(\xi)$, and therefore $X_0 \sm Q \x B \cong Q \x S^{k-1} \x S^{m-k-1} \x (-\e, \e) \x (0, \infty)$. Thus 
$$\op{rk}(H_*^\nov(X_0 \sm Q \x B)) = \op{rk}(H^{\nov}_*(Q) \otimes H_*(S^{k-1}\x S^{m-k-1})) = 4r,$$
 and it follows that either $H_*^\nov(X_0, X_0 \sm Q \x B)$ or $H_*^\nov(X_0)$ must have total rank at least $2r$. We consider each case separately, with the goal of proving the inequality 
 \begin{equation}\label{eq:1}
 \op{rk}(H_*^\nov(X_+\dd X_+)) + \op{rk}(H_*^\nov(X_-, \dd X_-)) \geq r.
 \end{equation}

\emph{Case (1) \quad $\op{rk}(H_*^\nov(X_0)) \geq 2r$:}

Consider the Mayer-Vietoris sequence for $(X_+ \cup X_0) \cup (X_- \cup X_0)$, which is

$$
\xymatrix{H_*^\nov(Q \x \R^m) \ar[r]^{[-1]} & H_*^\nov(X_0) \ar[d]\\
&H_*^\nov(X_+ \cup X_0) \oplus H_*^\nov(X_- \cup X_0)\ar[ul]&}  
$$

This implies that $$\op{rk}(H_*^\nov(X_+ \cup X_0)) + \op{rk}(H_*^\nov(X_- \cup X_0)) \leq \op{rk}(H_*^\nov(X_0)) + r.$$ 

By looking at the exact sequence of the pair $(X_+ \cup X_0, X_0)$, we get

$$\op{rk}(H_*^\nov(X_+ \cup X_0, X_0)) \geq \op{rk}(H_*^\nov(X_0)) - \op{rk}(H_*^\nov(X_+ \cup X_0)),$$

and we get a similar inequality for the pair $(X_- \cup X_0, X_0)$. Rearranging these three inequalities gives

\begin{align*}
\op{rk}(H_*^\nov&(X_+ \cup X_0, X_0)) + \op{rk}(H_*^\nov(X_+ \cup X_0, X_0)) \geq \\
& 2 \op{rk}(H_*^\nov(X_0)) - \op{rk}(H_*^\nov(X_+ \cup X_0)) - \op{rk}(H_*^\nov(X_- \cup X_0)) \geq \\
& \op{rk}(H_*^\nov(X_0)) - r \geq r,
\end{align*}

which proves the inequality \ref{eq:1} after applying excision.

\emph{Case (2) \quad $H_*^\nov(X_0, X_0 \sm Q \x B) \geq 2r$:}

Consider the Mayer-Vietoris sequence:

\begin{equation}\label{eq:2}
\xymatrix{H_*^\nov(Q \x \R^m, Q \x \R^m \sm Q \x B) \ar[r]^{[-1]} & H_*^\nov(X_0, X_0 \sm Q \x B) \ar[d]\\
& \hspace{-2.2 in} H_*^\nov(X_+ \cup X_0, (X_+ \cup X_0)\sm Q \x B) \oplus H_*^\nov(X_- \cup X_0, (X_- \cup X_0)\sm Q \x B)\ar[ul]&}  
\end{equation}

Notice that the pair $(X_+ \cup X_0, (X_+ \cup X_0) \sm Q \x B)$ is homeomorphic to $(X_+, X_+ \sm Q \x B)$, since $\nabla F$ presents $X_0$ as a collar neighborhood of $\dd X_+ \cup X_0$. Using excision we have that $$H_*^\nov(X_+, X_+\sm Q \x B) \cong H_*^\nov(X_+ \cap Q \x B, X_+ \cap \dd (Q \x B)).$$ Furthermore since we are using field coefficients, and using the hypothesis of either orientability of $Q$ or $\F = \Z_2$ we can apply Poincar\'e duality to obtain $$H_*^\nov(X_+\cap Q \x B, X_+ \cap \dd (Q \x B)) \cong H_{m+n-*}^\nov(X_+ \cap Q \x B, (\dd X_+)\cap Q \x B).$$

We also note that the pair $(X_+ \cap Q\x B, (\dd X_+) \cap Q\x B)$ is homeomorphic to $(X_+, \dd X_+)$, since $F$ is standard outside $Q \x B$. In summation we obtain an isomorphism

$$H_*^\nov(X_+ \cup X_0, (X_+ \cup X_0)\sm Q \x B) \cong H_{m+n-*}^\nov(X_+, \dd X_+),$$

and also a similar isomorphism for $X_-$. Using excision and K\"unneth we also see that $$H_*^\nov(Q \x \R^m, Q \x \R^m \sm Q \x B) \cong H^\nov_*(Q) \otimes H_*( \R^m, \R^m \sm B) \cong H^\nov_{*+m}(Q).$$ 

Putting these ingredients together, the exact sequence \ref{eq:2} becomes

$$
\xymatrix{H_{*+m}^\nov(Q) \ar[r]^{[-1]} & H_*^\nov(X_0, X_0 \sm Q \x B) \ar[d]\\
&  H_{m+n-*}^\nov(X_+, \dd X_+) \oplus H_{m+n-*}^\nov(X_-, \dd X_-)\ar[ul]&}
$$

This immediately implies the inequality \ref{eq:1}.

We now return to Morse-Novikov homology. By our construction of $\gamma = dH - \beta$, we have that $-\nabla \gamma$ is transverse to $\dd X_+$, and pointing out of $X_+$. Furthermore $-\nabla \gamma$ is transverse to $X_+ \cap \dd (Q \x B)$, pointing into $X_+ \cap \dd (Q \x B)$. Therefore the Morse-Novikov homology of $\gamma$ on the manifold $X_+ \cap Q \x B$ is computing the homology $H^\nov(X_+ \cap Q \x B, (\dd X_+) \cap Q \x B, [\beta])$, which is isomorphic to $H^\nov_*(X_+, \dd X_+, [\beta])$. Similarly the Morse-Novikov homology of $\gamma$ on the manifold $X_- \cap Q\x B$ computes $H^\nov_*(X_-, \dd X_-, [\beta])$. Inequality \ref{eq:1} therefore implies that $\gamma$ must have at least $r$ critical points on $X_+ \cup X_-$, which completes the proof.
\end{proof}

\bibliographystyle{plain}
 \bibliography{Bibliographie_en.bib}
\end{document}